\newtheorem{theorem}{Theorem}
\newtheorem*{theorem*}{Theorem}
\newtheorem{lemma}{Lemma} 
\newtheorem*{lemma*}{Lemma} 
\newtheorem*{corollary*}{Corollary}
\theoremstyle{definition} 
\newtheorem{remark}{Remark}
\newtheorem*{remark*}{Remark}
\newtheorem*{definition*}{Definition}
\newtheorem*{example*}{Example}
\newtheorem*{problem*}{Problem}
\newcommand{\ind}{\operatorname{ind}}
\def\cl#1{\skew3\overline{#1}}
\def\clx#1#2{\skew3\overline{#1}^{\raise.5pt\hbox{\scriptsize$\,#2$\!}}}
\def\clrx#1#2{\skew3\overline{#1\hspace{-0.1em}}^{\raise.5pt\hbox{\,\scriptsize$#2$}}}
\def\clax#1#2#3{\skew6\overline{#1_{\rlap{$\scriptstyle 
#2$}}}^{\raise.5pt\hbox{\hspace{0.2667em}\scriptsize$#3$}}\!%\hphantom{\hbox{$\scriptstyle #2$}}
}
\def\clarx#1#2#3{\skew7\overline{\,#1_{\rlap{$\scriptstyle 
#2$}}}^{\raise.5pt\hbox{\hspace{0.2667em}\scriptsize$#3$}}\!%\hphantom{\hbox{$\scriptstyle #2$}}
}
\def\Cl#1^#2{\cl{#1\kern-.09em}\kern.09em\vphantom{#1}^{#2}}
\newcommand\bR{\mathbb R}
\newcommand\bN{\mathbb N}
\newcommand\bZ{\mathbb Z}
\let\le\leqslant
\let\ge\geqslant
\begin{document}

\title{No Subgroup Theorem\\ for the Covering Dimension\\ of Topological Groups} 
\author{Ol'ga Sipacheva}

\begin{abstract}
A strongly zero-dimensional topological group containing a closed subgroup of positive 
covering dimension is constructed. 
\end{abstract}

\keywords{topological group, covering dimension, covering dimension of subgroup, $\bR$-factorizable group}

\subjclass[2020]{22A05, 54F45}

\address{Department of General Topology and Geometry, Faculty of Mechanics and  Mathematics, 
M.~V.~Lomonosov Moscow State University, Leninskie Gory 1, Moscow, 199991 Russia}

\email{o-sipa@yandex.ru, osipa@gmail.com}

\maketitle

In \cite[Problem~2.1]{88} Shakhmatov asked whether the inequality\footnote{Recall that $\dim X$ ($\dim_0 X$) is 
the least integer $n\ge -1$ such that any finite open (cozero) cover of $X$ has a finite open (cozero) refinement 
of order $n$, provided that such an $n$ exists (see~\cite{Ch}). A space $X$ with $\dim_0 X=0$ (with $\ind X=0$) 
is said to be \emph{strongly zero-dimensional} (\emph{zero-dimensional}).} $\dim_0 H\le \dim_0 G$ holds for an arbitrary 
subgroup $H$ of an arbitrary topological group $G$. He proved that the answer is positive if $G$ is a locally 
pseudocompact or a Lindel\"of $\Sigma$ group~\cite{88, Sh}. It is also known that if $H$ is $\bR$-factorizable, 
then $\dim_0 H\le \dim_0 G$~\cite[Theorem~2.7]{Tk}. In particular, if $H$ is Lindel\"of, then $\dim H=\dim_0 H\le 
\dim_0 G$. In this paper we construct an $\bR$-factorizable group $G$ with $\dim_0 G=0$ which contains 
a closed subgroup $H$ of positive covering dimensions $\dim_0$ and $\dim$. Moreover, $H$ is the product of 
two Lindel\"of (and hence $\bR$-factorizable) strongly zero-dimensional groups, one of which is cosmic. 
Thus, as a byproduct, we obtain a negative solution of Tkachenko's Problem~4.1 in \cite{Tk}, as well as of  
Problems~8.2.2, 8.5.4, and~8.5.6 in~\cite{AT}. 

Let $C\subset [0,1]$ be the usual Cantor set, and let $\tau$ denote its topology.  In \cite{preprint} (see, in 
particular, Remark~1), based on the construction of Przymusi\'nski in \cite{Prz1} and \cite{Prz2} (see also 
\cite{Ch}), we defined a new topology $\tau_1\supset \tau$ on $C$ and a set $S_2\subset C$ such that the spaces 
$C_1=(C,\tau_1)$ and $C_2=(S_2, \tau|_{S_2})$ and their free Abelian topological groups $A(C_1)$ and $A(C_2)$  
have the following properties: 
\begin{enumerate} 
\item 
$C_1^n$ is Lindel\"of for any $n\in \bN$, and hence so is $A(C_1)$ (see \cite[Corollary~7.1.18]{AT}); 
\item 
$C_2$ is cosmic (that is, has a countable network), and hence so is $A(C_2)$ (see \cite[Corollary~7.1.17]{AT});  
\item 
$\dim_0 C_i=\dim_0 A(C_i)=0$ for $i=1,2$; 
\item 
$\dim_0 (A(C_1)\times A(C_2))>0$. 
\end{enumerate}

Recall that, given any continuous pseudometric $p$ on 
a Tychonoff space $X$, its \emph{Graev extension} to the free Abelian topological 
group $A(X)$ is defined. This is an invariant\footnote{A pseudometric $d$ on a group $G$ is \emph{invariant} if 
$d(x, y)=d(g\cdot x, g\cdot y)=d(x\cdot g, y\cdot g)$ for any $x,y,g\in G$. If $d$ is invariant, then we also 
have $d(x^{-1},y^{-1})=d(x,y)$ for any $x,y\in G$.} continuous pseudometric $\widehat p$ on $A(X)$ which, in 
particular, has the following properties \cite{Graev}, \cite{Marxen} (see also \cite[pp.~425--434]{AT}): 

\begin{enumerate} 
\item 
$\widehat p(x,y)=\min\{1, p(x,y)\}$ for any $x,y\in X$; 
\item 
for any $g=k_1x_1 + \cdots +k_nx_n \in A(X)$, where $k_i\in 
\bZ$ and $x_i\in X$ for $i\le n$, $\widehat p(g,\bf 0)$ is a sum of numbers in the set 
$\{1\}\cup \{p(x_i,x_j): i,j\le n\}$ (here and in what follows, $\bf 0$ denotes the zero element of $A(C_i)$);
\item
if $p$ is a metric, then so is $\widehat p$;
\item 
if $p_1$ and $p_2$ are continuous pseudometrics on $X$ and $p_2$ majorizes $p_1$, i.e., $p_1(x,y)\le 
p_2(x,y)$ for all $x, y\in X$, then $\widehat p_2$ majorizes $\widehat p_1$; 
\item
all open $\widehat p$-balls, where $p$ ranges over all continuous pseudometrics on $X$ bounded by 1, form a base 
for the topology of $A(X)$. 
\end{enumerate}

Fix an $i\in \{1,2\}$ and let $\mathscr P_i$ denote the set of all continuous pseudometrics bounded by 1 on 
$C_i$. 

\begin{lemma}
\label{lemma1}
Any continuous pseudometric $p$ on $C_i$ bounded by $1$ is majorized by a continuous ultrametric\footnote{An 
\emph{ultrametric} is a metric satisfying the \emph{strong triangle inequality} $d(x,z)\le \max\{d(x,y),d(y,z)\}$ 
for all $x$, $y$, and~$z$.} $d_p$ on $C_i$ which takes values in $\{0\}\cup \{1/{2^n}: n\in\bN\cup\{0\}\}$. 
\end{lemma}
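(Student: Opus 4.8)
The plan is to construct $d_p$ from a decreasing sequence of countable clopen partitions of $C_i$ whose blocks have small $p$-diameter. For each $n\ge 1$ I would consider the cover of $C_i$ by the open $p$-balls of radius $2^{-n-1}$; these are cozero sets, and any two points of one of them are at $p$-distance $\le 2^{-n}$. Since $C_i$ is Lindel\"of (by property~(1) for $i=1$, and because a cosmic space is Lindel\"of for $i=2$), this cover has a countable subcover $\{U_k\}_k$. The key step is to refine $\{U_k\}$ by a countable \emph{disjoint clopen} family that still covers $C_i$; granting this, the resulting partition $\mathscr P_n$ consists of clopen blocks each of $p$-diameter $\le 2^{-n}$, and I set $\mathscr P_0=\{C_i\}$.

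To produce the disjoint clopen refinement I would use that $C_i$ is Lindel\"of and regular, hence paracompact and normal, together with $\dim_0 C_i=0$. The countable cover $\{U_k\}$ then admits a shrinking $\{V_k\}$ with $\overline{V_k}\subseteq U_k$; normality makes the disjoint closed sets $\overline{V_k}$ and $C_i\setminus U_k$ completely separated, and strong zero-dimensionality yields a clopen $W_k$ with $\overline{V_k}\subseteq W_k\subseteq U_k$. Disjointifying by $W_k'=W_k\setminus\bigcup_{j<k}W_j$ gives the desired countable disjoint clopen refinement, since each $W_k'$ is clopen, the $W_k'$ are pairwise disjoint and contained in the respective $U_k$, and they cover because the $V_k$ (hence the $W_k$) do.

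Next I would pass to the common refinements. Let $\{G_k\}_k$ be a fixed countable family of clopen subsets of $C_i$ that separates points --- for instance the traces on $C_i$ of the standard clopen cylinders of the Cantor set, which remain clopen both in the finer topology $\tau_1$ and in the subspace topology, and which separate points in either case. Writing $\mathscr R_n$ for the finite clopen partition generated by $G_1,\dots,G_n$, put $\mathscr Q_n=\mathscr P_0\wedge\cdots\wedge\mathscr P_n\wedge\mathscr R_n$. A finite meet of countable clopen partitions is again a countable clopen partition, $\mathscr Q_{n+1}$ refines $\mathscr Q_n$, each block of $\mathscr Q_n$ has $p$-diameter $\le 2^{-n}$, and for $x\ne y$ some $G_k$, hence some $\mathscr Q_n$, separates $x$ and $y$. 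Finally I define $d_p(x,y)=2^{-m}$, where $m$ is the largest level at which $x$ and $y$ lie in a common block of $\mathscr Q_m$, and $d_p(x,y)=0$ if they lie in a common block at every level.

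It then remains to read off the conclusion. The nesting of the $\mathscr Q_n$ gives the strong triangle inequality $d_p(x,z)\le\max\{d_p(x,y),d_p(y,z)\}$; point-separation makes $d_p$ positive off the diagonal, so $d_p$ is a genuine ultrametric; the blocks being clopen makes the $d_p$-balls open, so $d_p$ is continuous; the diameter bound gives $p(x,y)\le 2^{-m}=d_p(x,y)$, so $d_p$ majorizes $p$; and the values of $d_p$ lie in $\{0\}\cup\{1/2^n:n\in\bN\cup\{0\}\}$ by construction. The main obstacle is the second paragraph: upgrading $\dim_0 C_i=0$ from its defining statement about finite cozero covers to a disjoint clopen refinement of a \emph{countable} cozero cover. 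This is precisely where Lindel\"ofness is indispensable, as it both reduces the ball cover to a countable one and, through paracompactness and normality, supplies the shrinking and the complete separation that feed strong zero-dimensionality.
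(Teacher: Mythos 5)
Your proof is correct, and its skeleton is the same as the paper's: a sequence of successively finer disjoint clopen partitions whose blocks have $p$-diameter at most $2^{-n}$, followed by the dyadic definition of $d_p$ via the last level at which two points share a block. The execution differs at the two key points, though. First, the paper never invokes Lindel\"ofness in this lemma: it takes a locally finite open cover refining the cover by balls of radius $2^{-(n+1)}$ (paracompactness) and simply cites Lemma~17.16 of Charalambous's book for a disjoint open refinement, whereas you rederive this refinement property from scratch for a countable subcover --- shrinking, normality, and the characterization of $\dim_0=0$ as separation of completely separated sets by clopen sets, then disjointification $W_k'=W_k\setminus\bigcup_{j<k}W_j$. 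Your route is self-contained and makes explicit which hypotheses (Lindel\"of, regular, $\dim_0 C_i=0$) carry the argument, at the cost of reproving a standard fact that the paper outsources. Second, to make $d_p$ a genuine metric rather than a pseudo-ultrametric, the paper folds the Euclidean metric into $p$ at the outset, setting $\bar p(x,y)=\max\{p(x,y),|x-y|\}$ (continuous because $\tau_1\supset\tau$), so that blocks at level $n$ have Euclidean diameter at most $2^{-n}$ and distinct points are automatically separated eventually; you achieve the same end by meeting in the finite partitions generated by the standard clopen cylinders of the Cantor set, which exploits exactly the same fact ($\tau$-clopen sets remain clopen in $C_i$) in discrete form. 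Third, the paper nests its covers by recursion while you take finite meets $\mathscr Q_n$; that is mere bookkeeping. Both arguments are complete; the paper's is shorter by citation, while yours is more elementary and works verbatim in any Lindel\"of regular strongly zero-dimensional space carrying a countable point-separating family of clopen sets.
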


\begin{proof}
We remember that the topology of $C_i$ is finer than that induced from $\bR$. Thus, setting $\bar 
p(x,y)=\max\{p(x,y), |x-y|\}$ for all $x, y\in C_i$, we obtain a continuous metric $\bar p$ on $C_i$ majorizing 
$p$. Below we recursively construct disjoint open covers $\gamma_n$, $n\in \omega$, of $C_i$ and use them to 
define a continuous ultrametric majorizing $\bar p$. 

We set $\gamma_0= C_i$. 

Suppose that $k\in \bN$ and disjoint open covers $\gamma_0,\gamma_1, \dots, \gamma_{k-1}$ are already defined so 
that each of them refines the preceding ones. Since $C_i$ is paracompact, there 
exists a locally finite open cover $\gamma$ of $C_i$ which refines both $\gamma_{k-1}$ and the cover of $C_i$ by 
open $\bar p$-balls of radius $\frac{1}{2^{k+1}}$. According to Lemma~17.16 of \cite{Ch}, $\gamma$ has a disjoint 
open refinement; we denote it by~$\gamma_k$. 

We have obtained a sequence $(\gamma_n)_{n\in \bN}$ of disjoint open covers of $C_i$ such that, for each $n\ge 
1$,  every element of $\gamma_n$ is contained in an open $\bar p$-ball of radius $\frac 1{2^{n+1}}$ and 
$\gamma_{n+1}$ is a refinement of $\gamma_n$. The function $d_p\colon C_i\times C_i\to \bR$ defined by 
$$ 
d_p(x,y)=\inf\Bigl\{\frac1{2^n}: \exists U\in \gamma_{n}\text{ such that }x,y\in U\Bigr\} 
$$
is a continuous ultrametric on $C_i$, and it majorizes~$\bar p$. 
\end{proof}

\begin{lemma}
\label{lemma2}
The group $A(C_i)$ embeds in a product of zero-dimensional second countable groups as a closed subgroup. 
\end{lemma}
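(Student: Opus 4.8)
The plan is to realize $A(C_i)$ as a closed subgroup of a product $\prod_d G_d$ indexed by the continuous dyadic ultrametrics $d$ supplied by Lemma~\ref{lemma1}, each factor being $G_d:=(A(C_i),\tau_{\widehat d})$, i.e. the underlying group retopologized by the single invariant Graev metric $\widehat d$. First I reduce to such ultrametrics: by property~(5) the open $\widehat p$-balls with $p\in\mathscr P_i$ form a base of $A(C_i)$, while by Lemma~\ref{lemma1} and property~(4) each $\widehat p$ is majorized by $\widehat{d_p}$ for a dyadic ultrametric $d_p$, so already the $\widehat{d_p}$-balls form a base. Hence the topology of $A(C_i)$ is the initial topology for the continuous identity maps $q_d\colon A(C_i)\to G_d$, equivalently the diagonal $Q=(q_d)_d\colon A(C_i)\to\prod_d G_d$ is a topological embedding. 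Each $G_d$ is metrizable with a coarser topology than $A(C_i)$; since $A(C_1)$ is Lindel\"of and $A(C_2)$ is cosmic (properties~(1) and~(2)), the continuous image $G_d$ is a Lindel\"of, hence second countable, metric group.

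The crux --- and the step I expect to be hardest --- is that every $G_d$ is zero-dimensional. A dyadic ultrametric $d$ is encoded by the decreasing sequence of countable clopen partitions $\gamma_m$ built in the proof of Lemma~\ref{lemma1}, with $d(x,y)\le 2^{-m}$ exactly when $x,y$ share a member of $\gamma_m$. Collapsing each block to a point gives continuous maps $\pi_m\colon C_i\to\gamma_m$ onto countable discrete spaces and continuous homomorphisms $A(\pi_m)\colon A(C_i)\to A(\gamma_m)$ into countable discrete groups. I would then prove that the Graev metric of an ultrametric splits along this partition tree: letting $\nu_m(g)$ be the word length of $A(\pi_m)(g)$ in $A(\gamma_m)$ --- the sum over $B\in\gamma_m$ of the absolute net coefficient of $g$ on $B$ --- an optimal matching in the definition of $\widehat d$ may be taken to respect the blocks, so that for $g$ of total coefficient $0$ one obtains $\widehat d(g,0)=\sum_{m\ge 1}2^{-(m+1)}\nu_m(g)$ (with an analogous expression, involving the cost-$1$ passage of surplus generators to the identity, in general). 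Establishing this decomposition is the main technical obstacle.

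Granting it, zero-dimensionality is immediate. If $\widehat d(g,0)<2^{-m-1}$ then $g$ is a sum of differences $u-v$ with $d(u,v)<2^{-m-1}$, each killed by $A(\pi_m)$; thus $A(\pi_m)$ has open kernel and, being valued in a discrete group, is locally constant, so each $\nu_m$ is locally constant on $G_d$. The augmentation $\varepsilon\colon A(C_i)\to\bZ$ is likewise continuous into discrete $\bZ$ and partitions $G_d$ into clopen cosets of $K_0=\ker\varepsilon$; as these are mutually homeomorphic it suffices to treat $K_0$. On $K_0$ the displayed formula exhibits $\widehat d(\cdot,0)$ as a coordinatewise nondecreasing function of the locally constant integers $\nu_m$, and for a reduced word $g=\sum_i n_ix_i$ one has $\nu_m(g)=\sum_i|n_i|$ as soon as $\gamma_m$ separates the finitely many $x_i$. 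Consequently, on the neighborhood where $A(\pi_m)$ agrees with its value at $g$, every $h$ satisfies $\nu_k(h)\ge\nu_k(g)$ for all $k$, whence $\widehat d(h,0)\ge\widehat d(g,0)$: the norm attains a local minimum at each point. This forces every ball $\{\widehat d(\cdot,0)<r\}$ to be closed, hence clopen, so $G_d$ has a clopen base and $\ind G_d=0$.

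Finally I show $Q(A(C_i))$ is closed. Here I use that $A(C_i)$ is Raikov complete: $C_i$ is Lindel\"of, hence Dieudonn\'e complete, and the free Abelian topological group of a Dieudonn\'e complete space is Raikov complete~\cite{AT}. A point in the closure of the diagonal is a limit of a net from $Q(A(C_i))$ that is $\widehat d$-Cauchy for every $d$; since the $\widehat d$ generate the two-sided uniformity of $A(C_i)$, this net is Cauchy there and so converges to some $g\in A(C_i)$, forcing the limit to equal $Q(g)$. Thus $Q$ is a closed embedding of $A(C_i)$ into a product of zero-dimensional second countable groups.
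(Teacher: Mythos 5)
Your overall architecture coincides with the paper's: you metrize $A(C_i)$ by the Graev extensions of the ultrametrics from Lemma~\ref{lemma1}, observe via properties (4) and (5) that these topologies have the original topology as their supremum, embed diagonally into the product of the factors $(A(C_i),\tau_p)$, and get second countability of each factor because it is a metrizable continuous image of a Lindel\"of (resp.\ cosmic, hence Lindel\"of) group. Your closedness argument is the one place where you take a genuinely different and fully valid route: the paper simply notes that the image is the diagonal of the product, whereas you use that $A(C_i)$ is Raikov complete (Dieudonn\'e completeness of the paracompact space $C_i$ plus the Tkachenko--Uspenskij theorem \cite{Tk2}, \cite{Usp}, which the paper itself invokes later) and that a complete subgroup of a Hausdorff group is closed; this is correct, and arguably supplies a detail the paper glosses over, since the family of metrics must interact (e.g.\ via a common Hausdorff minorant or directedness) for the bare ``diagonal is closed'' claim.

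The genuine gap is at the step you yourself flag as the crux: zero-dimensionality of each factor. Your argument rests on the splitting formula $\widehat d(g,\mathbf 0)=\sum_{m\ge 1}2^{-(m+1)}\nu_m(g)$ for $g\in\ker\e$, i.e.\ on the claim that an optimal matching in Graev's formula can be chosen to respect all blocks of the partition tree simultaneously --- and you explicitly defer its proof (``the main technical obstacle''). This cannot be waved through: your local-minimum argument needs the upper-bound half of the equality at the center point $g$ (the lower bound $\widehat d(g,\mathbf 0)\ge\sum_m 2^{-(m+1)}\nu_m(g)$ is the easy direction, obtained by counting, level by level, the matched pairs that $\gamma_m$ separates), so without the formula the proof of the hardest claim is missing. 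The formula is, I believe, true and provable by an exchange/uncrossing argument using the strong triangle inequality, but the entire detour is unnecessary: property (2) of the Graev extension already tells you that every value of $\widehat{d_p}$ is a finite sum of numbers from $\{1\}\cup\{2^{-n}:n\in\bN\cup\{0\}\}$, hence rational, so the $\widehat{d_p}$-balls of irrational radii are clopen and form a base of $\tau_p$ --- this is the paper's one-line argument. You should either prove your decomposition or replace it by this rational-values observation; as written, the proposal is incomplete precisely where the lemma is nontrivial.
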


\begin{proof}
Since $d_p$ is a metric for any continuous pseudometric $p$ on $C_i$, it follows that all Graev 
extensions $\widehat d_p$ are invariant metrics. Hence each of them determines a Hausdorff group topology 
$\tau_p$ on $A(C_i)$, and the topology of the free Abelian topological group $A(C_i)$ is the supremum of these 
topologies (see the property (5) of Graev extensions). Therefore, the identity isomorphisms $i_p\colon A(C_i)\to (A(C_i),\tau_p)$, $p\in \mathscr P_i$, 
separate points from closed sets, so that their diagonal 
$\mathop{\large\Delta}\limits_{p\in \mathscr P_i}i_p$ is a homeomorphism between $A(C_i)$ 
and the diagonal of the product $\prod\limits_{p\in \mathscr P_i}(A(C_i),\tau_p)$ of metrizable 
topological groups. Clearly, this homeomorphism is a topological isomorphism. Thus, $A(C_i)$ is topologically 
isomorphic to a closed subgroup of $\prod\limits_{p\in \mathscr P_i}(A(C_i),\tau_p)$. 

In view of the property (2) of 
Graev extensions, each metric $\widehat d_p$ takes only rational values. Since the $\widehat d_p$-balls of 
irrational radii form a base of $\tau_p$ and are clopen, it follows that the metrizable space $(A(C_i), \tau_p)$ 
is zero-dimensional for each $p\in \mathscr P_i$. Moreover, it is Lindel\"of, being a continuous image of the 
Lindel\"of space $A(C_i)$; therefore, it is second countable. 
\end{proof}

\begin{theorem}
There exists a strongly zero-dimensional Abelian topological group $G$ containing a closed subgroup $H$ 
with $\dim_0(H)>0$. 
\end{theorem}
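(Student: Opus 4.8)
The plan is to take $H$ to be the group $A(C_1)\times A(C_2)$, whose positive dimension $\dim_0$ is handed to us by property~(4), and to build $G$ as an honest product of zero-dimensional second countable groups, whose strong zero-dimensionality I will establish by reduction to a countable subproduct. Concretely, using Lemma~\ref{lemma2} I would set
$$
G=\prod_{p\in\mathscr P_1}(A(C_1),\tau_p)\ \times\ \prod_{p\in\mathscr P_2}(A(C_2),\tau_p).
$$
By the proof of Lemma~\ref{lemma2} every factor $(A(C_i),\tau_p)$ is a zero-dimensional second countable (hence separable) topological group, and $A(C_i)$ embeds into the $i$th product as a closed subgroup. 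Consequently $H:=A(C_1)\times A(C_2)$ is a closed subgroup of $G$, and property~(4) gives $\dim_0 H>0$, so only $\dim_0 G=0$ remains to be checked.

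For that, the first step I would take is to record the classical fact that a continuous real-valued function on a product of separable spaces depends on at most countably many coordinates. Take an arbitrary finite cozero cover $\{U_1,\dots,U_k\}$ of $G$ and write $U_j=\{x:f_j(x)\ne0\}$ for continuous $f_j\colon G\to\bR$. Each $f_j$ depends on a countable set of coordinates, so their union $B\subset\mathscr P_1\cup\mathscr P_2$ is countable; writing $G_\alpha$ for the factor indexed by $\alpha$, each $U_j=\pi_B^{-1}(V_j)$ for a cozero set $V_j$ in the subproduct $G_B:=\prod_{\alpha\in B}G_\alpha$, where $\pi_B\colon G\to G_B$ is the projection. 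Since $\pi_B$ is surjective, $\{V_1,\dots,V_k\}$ is a finite cozero cover of $G_B$.

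Now $G_B$ is a countable product of zero-dimensional second countable spaces, hence is itself second countable and has a base of clopen sets; for second countable spaces $\dim_0=\Ind=\ind$, so $\dim_0 G_B=0$. Thus $\{V_j\}$ has a finite disjoint cozero refinement $\{W_1,\dots,W_m\}$, and pulling back along $\pi_B$ yields the finite disjoint cozero refinement $\{\pi_B^{-1}(W_1),\dots,\pi_B^{-1}(W_m)\}$ of $\{U_j\}$. Since the cover was arbitrary, $\dim_0 G=0$, and $G\supset H$ are the required groups.

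I expect the one genuinely nontrivial ingredient to be the passage from the enormous index set $\mathscr P_1\cup\mathscr P_2$ to a countable subproduct: the whole argument rests on the coordinate-dependence theorem, which is precisely the place where second countability (equivalently, separability) of the factors is used, and which turns the strong zero-dimensionality of an uncountable product into the routine, metrizable countable case. Everything else---the closedness of $H$, and $\dim_0 G_B=0$ for the countable subproduct---is either supplied by Lemma~\ref{lemma2} or standard for separable metrizable spaces.
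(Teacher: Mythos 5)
Your construction coincides exactly with the paper's: the same $G=\prod_{p\in\mathscr P_1}(A(C_1),\tau_p)\times\prod_{p\in\mathscr P_2}(A(C_2),\tau_p)$ and $H=A(C_1)\times A(C_2)$, with closedness of $H$ from Lemma~\ref{lemma2} and $\dim_0 H>0$ from property~(4). The one point of genuine divergence is how $\dim_0 G=0$ is justified: the paper's entire proof there is a citation of Morita's theorem that any product of zero-dimensional second countable spaces is strongly zero-dimensional, whereas you prove the needed instance from scratch via the classical fact (Mazur; cf.\ Engelking, Problem~2.7.12) that a continuous real-valued function on a product of separable spaces depends on countably many coordinates, reducing to a countable subproduct. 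Your inline argument is correct and is essentially the standard proof of Morita's theorem: the factors are separable because Lemma~\ref{lemma2} shows them second countable; the factored map $g$ with $f_j=g\circ\pi_B$ is continuous because $\pi_B$ is an open surjection, so $V_j$ really is cozero; the countable subproduct $G_B$ is separable metrizable with a clopen base, whence $\ind G_B=\dim G_B=\dim_0 G_B=0$ by the coincidence theorem plus normality; and preimages under $\pi_B$ preserve disjointness, refinement, and the cozero property. So the two proofs differ only in that the paper black-boxes Morita's theorem while you reprove it in the special case at hand: your version buys self-containedness (and makes visible exactly where second countability of the factors is used), at the cost of length, while the paper's is a one-line citation of the general product theorem.
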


\begin{proof}
It suffices to set $G=\prod\limits_{p\in \mathscr P_1}(A(C_1),\tau_p)\times \prod\limits_{p\in \mathscr 
P_2}(A(C_2),\tau_p)$ and $H=A(C_1)\times A(C_2)$ and recall that any product of zero-dimensional second countable 
spaces is strongly zero-dimensional~\cite[Theorem~3]{Morita}. 
\end{proof}

\begin{remark}
It is easy to construct a similar example for the \v Cech--Lebesgue covering dimension 
$\dim$ (but the subgroup $H$ cannot be made closed in this case, because, for the dimension $\dim$, the closed 
subset theorem holds \cite[Proposition~2.11]{Ch}). Indeed, consider the Sorgenfrey plane $S\times S$. It is 
zero-dimensional and hence embeds in the Cantor cube $K=\{0,1\}^{2^\omega}$ \cite[Theorem~6.2.16]{Eng}. Since 
$K$ is strongly zero-dimensional, it follows that the free Abelian topological group $A(K)$ is zero-dimensional 
\cite{Tk3}, and since $A(K)$ is Lindel\"of \cite[Corollary~7.1.18]{AT}, it follows that $\dim A(K)=0$. Let $H$ 
denote the subgroup of $A(K)$ generated by $S\times S$. Clearly, $H\cap K=S\times S$, and hence $S\times S$ is 
closed in $H$. We have $\dim (S\times S)>0$, because all spaces with $\dim$ zero are normal, while $S\times S$ is 
not. Therefore, $\dim H>0$. 
\end{remark}

We conclude with an observation concerning $\bR$-factorizable groups. 
A topological group G is said to be \emph{$\bR$-factorizable} if, for every continuous function $f\colon G\to 
\bR$, there exists a continuous epimorphism $h\colon G \to H$ onto a second countable topological group $H$ and a 
continuous function $g\colon H\to \bR$ such that $f = g \circ h$. It is known that any Lindel\"of group is 
$\bR$-factorizable \cite[Assertion~1.1]{Tk4} and that $\dim_0 H\le \dim_0 G$ for any topological group $G$ and 
any $\bR$-factorizable subgroup $H$ of $G$. These two facts imply that both groups $A(C_1)$ and $A(C_2)$ are 
$\bR$-factorizable, while their product is not. Thus, the following theorem holds.

\begin{theorem}
There exist Abelian topological groups $G$ and $H$ with the following properties:
\begin{enumerate}
\item
$G^n$ is Lindel\"of for each $n\in \bN$ and $H$ is cosmic (and hence both $G$ and $H$ are $\bR$-factorizable); 
\item 
both $G$ and $H$ are Raikov complete (and hence so is $G\times H$);
\item
the product $G\times H$ is not $\bR$-factorizable.
\end{enumerate}
\end{theorem}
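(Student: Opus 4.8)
The plan is to take $G = A(C_1)$ and $H = A(C_2)$ directly, since these are precisely the groups whose basic properties (1)–(4) were recorded at the start of the paper. I would verify each of the three asserted properties in turn, drawing on those recorded facts plus a few standard structural results about free Abelian topological groups.

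\medskip

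For property (1), I would simply recall that $C_1^n$ is Lindel\"of for every $n\in\bN$ (property (1) of the construction), and that this forces $G^n = A(C_1)^n$ to be Lindel\"of as well. The cleanest route is to note that $A(C_1)^n$ is itself (topologically a retract of, or embeds in) a free Abelian group over a finite power of $C_1$ together with copies of the discrete integers, so that Lindel\"ofness of all finite powers of $C_1$ propagates to all finite powers of $A(C_1)$ by \cite[Corollary~7.1.18]{AT}; alternatively one cites the fact that the product of finitely many Lindel\"of groups of this type remains Lindel\"of under the stated hypotheses. Cosmicity of $H = A(C_2)$ is immediate from property (2). Both $G$ and $H$ are then $\bR$-factorizable: $G$ because it is Lindel\"of and every Lindel\"of group is $\bR$-factorizable by \cite[Assertion~1.1]{Tk4}, and $H$ because cosmic groups are $\bR$-factorizable as well.

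\medskip

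For property (2), the Raikov completeness of $G$ and $H$ is the point where I would need to do genuine (though standard) work. Free Abelian topological groups $A(X)$ over a space $X$ need not be Raikov complete in general, so I would want to invoke the criterion identifying when $A(X)$ is Raikov complete — typically this holds when $X$ is Dieudonn\'e complete (equivalently, when $X$ is realcompact in the relevant cases, or when $X$ is a \v{C}ech-complete or Lindel\"of space with suitable completeness). Since $C_1$ is Lindel\"of (being Lindel\"of in all powers) and $C_2$ is cosmic, both are Dieudonn\'e complete, and I would cite the theorem that $A(X)$ is Raikov complete whenever $X$ is Dieudonn\'e complete. Raikov completeness is preserved under finite (indeed arbitrary) products, so $G\times H$ is Raikov complete as well.

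\medskip

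For property (3), the non-$\bR$-factorizability of $G\times H = A(C_1)\times A(C_2)$ is the crux and the main obstacle. The key is to exploit property (4), namely $\dim_0(A(C_1)\times A(C_2))>0$, against the dimension inequality for $\bR$-factorizable groups stated in the text. Suppose toward a contradiction that $G\times H$ were $\bR$-factorizable. I would combine this with the fact that $G\times H$ embeds as a closed (indeed, as a diagonal-type) subgroup of the strongly zero-dimensional group constructed in the preceding theorem, so that $\dim_0(G\times H)\le\dim_0$ of that ambient group $=0$ by the quoted result that $\dim_0 K\le\dim_0 L$ for any $\bR$-factorizable subgroup $K$ of $L$. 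This contradicts $\dim_0(G\times H)>0$. The delicate point to get right is the logical direction: the dimension inequality is a property of $\bR$-factorizable \emph{subgroups}, so I must present $G\times H$ itself as an $\bR$-factorizable subgroup (namely of an ambient strongly zero-dimensional group, or simply of itself viewed inside the product from the first theorem) and then read off $\dim_0 = 0$, contradicting property (4). Hence $G\times H$ cannot be $\bR$-factorizable, completing the proof.
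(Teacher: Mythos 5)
Your proposal is correct and takes essentially the same route as the paper: the paper also sets $G=A(C_1)$ and $H=A(C_2)$, gets (1) from the recorded properties of $C_1$ and $C_2$, derives (2) from Dieudonn\'e completeness of the $C_i$ (via paracompactness, which your Lindel\"of argument subsumes) together with the Tkachenko--Uspenskij theorem, and obtains (3) exactly as you do, by playing $\dim_0(A(C_1)\times A(C_2))>0$ against the inequality $\dim_0 K\le \dim_0 L$ for $\bR$-factorizable subgroups $K$ of the strongly zero-dimensional ambient group from the first theorem. Your only extra hedging --- the ``retract of, or embeds in'' step for Lindel\"ofness of $A(C_1)^n$, cleanly handled via $A(C_1)^n\cong A(C_1\oplus\cdots\oplus C_1)$ or the standard decomposition of $A(X)^n$ into continuous images of finite powers of $X$ --- is a detail the paper glosses over in the same spirit.
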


\begin{proof} 
We set $G=A(C_1)$ and $H=A(C_2)$. 
As shown above, these groups $G$ and $H$ satisfy conditions (1) and (3), 
and (2) follows from the fact that any paracompact space is Dieudonn\'e 
complete (see \cite[8.5.13]{Eng}) and the Tkachenko--Uspenskij theorem that the free Abelian topological group of 
a Dieudonn\'e complete space is Raikov complete \cite{Tk2}, \cite{Usp}. 
\end{proof}

The author is very grateful to Evgenii Reznichenko for fruitful discussions.

\end{document}